\newcommand{\field}[1]{\ensuremath{\mathbb{#1}}}
\newcommand{\integer}{\field{Z}}
\newcommand{\col}[1]{\ensuremath\mathcal{C}_{#1}}
\newcommand{\hcol}[1][1]{\ensuremath\hat{\mathcal{C}}_{#1}}
\newtheorem{theorem}{Theorem}
\newtheorem{lemma}{Lemma}[section]
\newtheorem{corollary}[lemma]{Corollary}
\numberwithin{equation}{section}
\theoremstyle{definition}
\newtheorem*{definition}{Definition}
\newtheorem*{mainproblem}{Main Problem}
\newtheorem*{mainproblemdash}{Main Problem, reformulated}
\theoremstyle{remark}
\newtheorem{remark}[lemma]{Remark}
\begin{document}

\title[Maximising Common Fixtures in a Round Robin with Two Divisions]{Maximising Common Fixtures in a Round Robin Tournament with Two Divisions}

\author{Wayne Burrows}
\address{Institute of Fundamental Sciences, Massey University, Private Bag 11222, Palmerston North 4442, New Zealand}
\email{w.j.burrows@massey.ac.nz}

\author{Christopher Tuffley}
\address{Institute of Fundamental Sciences, Massey University, Private Bag 11222, Palmerston North 4442, New Zealand}
\email{c.tuffley@massey.ac.nz}

\date{\today}

\subjclass[2010]{05C70}
\keywords{Sports scheduling, tournament, round robin draw, common fixture, edge colouring, one-factorisation, bipyramidal}

\begin{abstract}
We describe a round robin scheduling problem for a competition played in two divisions, motivated by a scheduling problem brought to the second author by a local sports organisation. The first division has teams from $2n$ clubs, and is played in a double round robin in which the draw for the second round robin is identical to the first. The second division has teams from two additional clubs, and is played as a single round robin during the first $2n+1$ rounds of the first division. We will say that two clubs have a \emph{common fixture} if their teams in division one and two are scheduled to play each other in the same round, and show that for $n\geq2$ the maximum possible number of common fixtures is $2n^2 - 3n + 4$. Our construction of draws achieving this maximum is based on a bipyramidal one-factorisation of $K_{2n}$, which represents the draw in division one. Moreover, if we additionally require the home and away status of common fixtures to be the same in both divisions, we show that the draws can be chosen to be balanced in all three round robins. 
\end{abstract}

\maketitle

\section{Introduction}

We discuss a round robin tournament scheduling problem played in two divisions, with the objective to maximise the number of common fixtures between two clubs playing against each other in the same round in the two separate divisions. The first division has teams from $2n$ clubs, and is played in a double round robin in which the draw for the second round robin is identical to the first. The second division has teams from two additional clubs, and is played as a single round robin during the first $2n+1$ rounds of the first division. We say that two clubs have a \emph{common fixture} if their division one and two teams both play each other in the same round, and show that for $n\geq2$ the maximum possible number of common fixtures is $2n^2 - 3n + 4$. Our construction achieving this bound is based on a bipyramidal one-factorisation of the complete graph $K_{2n}$.

This problem was motivated by a scheduling problem in the Manawat\=u Rugby Union's first and second division tournaments in New Zealand in 2011. In that case there were ten clubs with a team in both divisions, and an additional two clubs with teams in the second division only. The Manawat\=u Rugby Union contacted the second author to request help in designing a schedule to maximise the number of common fixtures. A near optimal schedule was found by the second author and implemented by the rugby union. We solve the problem for any number of clubs in the first division, with two additional clubs in the second division.

\subsection{Organisation}

The paper is organised as follows. 
In Section~\ref{sec:theproblem} we give a precise statement of our problem, reformulate it in graph-theoretic terms, and state our main theorem. In Section~\ref{sec:upperbound} we establish the upper bound given in our theorem, and in Section~\ref{sec:construction} we construct draws achieving this bound. This is done in two parts: we first handle the case $n=2$ separately in Section~\ref{sec:n=2}, and then give a general construction for $n\geq 3$ in Section~\ref{sec:general}. We conclude the paper in Section~\ref{sec:homeaway} by considering an oriented version of the problem, representing home and away status, and show that the draws can be chosen to be balanced.

\subsection{Related work}

In theory and application it is often desirable to construct a sports schedule subject to additional constraints or objectives. Many such problems have been investigated, such as:  
Victoria Golf Association scheduling two divisions to avoid clashes~\cite{bib:Beecham&Hurley}; scheduling $n$ teams from each of two geographic locations so that games between teams from the same location take place on weekdays, and games between teams from different locations take place at weekends~\cite{bib:dewerra1980};
shared facilities in an Australian Basketball Association~\cite{bib:deWerra&Jacot-Descombes&Masson}; 
scheduling a round robin tennis tournament under availability constraints on courts and players~\cite{bib:DellaCroce&Tadei&Asioli}; 
minimising ``carry-over effects'', where teams $x$ and $y$ both play team $z$ immediately after playing team $w$~\cite{bib:anderson1997}; 
avoiding consecutive away fixtures in the Czech National Basketball League~\cite{bib:Froncek}; 
minimising waiting times in tournaments played on a single court~\cite{bib:knust2008}; 
scheduling to avoid teams playing consecutive games against teams from the same ``strength group''~\cite{bib:briskorn2009,bib:briskorn-kunst2010}; 
minimising breaks (consecutive home or away games)~\cite{bib:Hof}; a travelling tournament problem, where it is desirable to have a number of consecutive away games (on tour) applied to a Japanese baseball league~\cite{bib:Hoshino}. 
See Wallis~\cite[Chapter 5]{bib:Wallis} or Kendall et al's comprehensive survey article~\cite{bib:Kendall} and the references therein for further discussion and examples. 

In problems involving teams that share facilities (for example, teams belonging to the same club but playing in different divisions, as we consider here) it is common to apply the constraint that such teams cannot have a home game in the same round (see for example~\cite{bib:Beecham&Hurley,bib:deWerra&Jacot-Descombes&Masson} and~\cite[p. 35]{bib:Wallis}). This reflects the common situation where it may be physically impossible to conduct two games at the same time at the same venue. In this paper we drop this constraint, and instead seek to maximise the number of games between teams from the same two clubs, played in the same round and at the same venue. This might for example allow the club's teams to share transport, reducing the costs associated with travel. The scheduling difficulty in the problem considered here arises from the fact that not all clubs have a team in both divisions.

\section{Problem statement}
\label{sec:theproblem}

\subsection{Setting}

Our interest in this paper is in \emph{round robin tournaments}: tournaments in which every team or competitor taking part in the competition plays against every other team or competitor exactly once (a [\emph{single}] \emph{round robin}) or twice (a \emph{double round robin}). For simplicity we will use the term \emph{team} throughout (that is, we allow teams consisting of one player only), since the number of players in a team plays no role in our discussion. We assume that the round robin tournament takes place as a series of \emph{rounds}, in which each team plays exactly one match against another team. To handle the case where there is an odd number of teams we follow common practice by introducing a phantom team; when a team is scheduled to play against the phantom team they have a bye in that round. Thus in what follows we will always assume that there is an even number of teams.

We will regard the teams as belonging to \emph{clubs}, and will assume that each club may enter at most one team in a given tournament. However, a club may have more than one team (for example, an ``A'' and a ``B'' team, a junior and a senior team, or a men's and a women's team) that take part in different tournaments. We will refer to each tournament and associated set of participating teams as a \emph{division}. 

In some sports or tournaments one of the two teams taking part in a given match may be in a distinguished position. This is the case for example where one team plays first, or where matches take place at the facilities belonging to one of the two teams, with the team playing at their own facilities being the ``home'' team, and the team travelling to the other's facilities being the ``away'' team. For simplicity we will use the terms \emph{home} and \emph{away} throughout to specify this distinction. 

The \emph{draw} for a tournament specifies which matches take place in each round. In some sports home and away are decided by lot, whereas in others it must be specified as part of the draw. In such cases it is desirable that every team has nearly equal numbers of home games, and we will say that a draw is \emph{balanced} if the numbers of home games of any two teams differ by at most one. (Note that in a single round robin with $2n$ teams it is impossible for all teams to have the same number of home games, because each team plays $2n-1$ games.) We will use the term \emph{fixture} to refer to a match scheduled to take place in a particular round, with if applicable a designation of home and away teams.

\subsection{Formulation}

Let $n$ be a positive integer. 
We consider a competition played in two divisions among $2n+2$ clubs labelled $0,1, \dots, 2n+1$. We suppose that
\begin{enumerate}
\renewcommand{\theenumi}{(C\arabic{enumi})}
\renewcommand{\labelenumi}{(C\arabic{enumi})}
\item
clubs $0, 1, \ldots, 2n-1$ have a team competing in each division;
\item
clubs $2n$ and $2n+1$ have teams competing in division two only;
\item\label{double.condition}
division one is played as a double round robin, in which the draws for rounds $r$ and $r+(2n-1)$ are identical for $r=1,\ldots,2n-1$, but with (if applicable) home and away reversed;
\item\label{coincide.condition}
division two is played as a single round robin, co-inciding with the first $2n+1$ rounds of division one.
\end{enumerate}
We will say that clubs $x$ and $y$ have a \textbf{common fixture} in round $r$ if their division one and two teams both play each other in round $r$. When home and away are specified as part of the draw we additionally require that the same club should be the home team in both divisions.

It is clear that there are circumstances in which common fixtures might be desirable. For example, they might allow a club's division one and two teams to share transport, and they might allow the club's supporters to attend both the division one and two games. This motivates our main problem:

\begin{mainproblem}
Construct round robin draws maximising the total number of common fixtures among clubs $0,1,\ldots,2n-1$.
\end{mainproblem}
Our construction yields the following result:

\begin{theorem}\label{thm:main}
Let $n$ be a positive integer. Then the maximum possible number of common fixtures is $1$ if $n=1$, and $c(n)=2n^2-3n+4$ if $n\geq 2$. Moreover, if home and away are specified the draws can be chosen to be balanced in all three round robins (division two, and both round robins of division one).
\end{theorem}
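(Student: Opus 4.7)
The plan is to attack the theorem in four parts, corresponding to the four sections mentioned in the organisation. The case $n=1$ is elementary and reduces to direct inspection of the handful of possible schedules with two clubs in division one and four in division two. For $n \geq 2$, the work splits into establishing the upper bound $2n^2 - 3n + 4$, exhibiting a construction that attains it, and then refining the construction to be balanced under home/away labels.

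For the upper bound I would proceed round by round. Writing $F_1, \ldots, F_{2n-1}$ for the matchings of division one's first round robin and $G_1, \ldots, G_{2n+1}$ for those of division two, the matching $G_r$ restricted to the $2n$ original clubs has size $n$ exactly when clubs $2n$ and $2n+1$ play each other in round $r$ (a unique round $r^*$) and has size $n-1$ otherwise. Since each $F_r$ is a perfect matching of size $n$ on the same vertex set, this gives $|F_r \cap G_r| \leq n$ when $r = r^*$ and $\leq n-1$ otherwise for $r \leq 2n-1$. Rounds $2n$ and $2n+1$ replay the matchings $F_1$ and $F_2$ by condition~\ref{double.condition}, and because $G$ is a proper edge-colouring, $F_1 \cap G_1$ and $F_1 \cap G_{2n}$ are disjoint; hence $|F_1 \cap G_1| + |F_1 \cap G_{2n}| \leq |F_1| = n$, and similarly for $F_2$ in rounds $2$ and $2n+1$. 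A short case analysis on the location of $r^*$ shows that the largest attainable total is $2n^2 - 3n + 4$, occurring precisely when $r^* \in \{3, \ldots, 2n-1\}$.

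For the construction I would take $F$ to be a bipyramidal one-factorisation of $K_{2n}$ (as signalled in the abstract): a one-factorisation admitting a cyclic automorphism that fixes two distinguished vertices and rotates the remaining $2n-2$ in a single cycle. This symmetry makes it possible to align $F_r$ with $G_r$ round by round for $r \in \{3, \ldots, 2n-1\}$, so that the two vertices partnered with the extra clubs in $G_r$ are precisely the vertices that are paired with each other in $F_r$, saving all but one edge per round. The paired rounds $(1, 2n)$ and $(2, 2n+1)$ are then exploited by splitting the edges of $F_1$ (respectively $F_2$) between $G_1$ and $G_{2n}$ (respectively $G_2$ and $G_{2n+1}$) so as to saturate the paired-rounds bound. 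The small case $n=2$ has too little structure for the bipyramidal approach and must be treated by an ad hoc construction in Section~\ref{sec:n=2}.

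For the balance refinement I would orient the edges of the completed draws so that the home-game counts in each of the three single round robins differ by at most one. Condition~\ref{double.condition} already inverts home/away between the two round robins of division one, so balance in the first automatically yields balance in the second. The cyclic symmetry of the bipyramidal factorisation provides a natural invariant orientation on $K_{2n}$, which can then be propagated to the one-factorisation of $K_{2n+2}$ so that common fixtures agree on which club is at home. The main obstacle throughout is the construction: simultaneously achieving equality in every inequality of the upper bound requires $F$ and $G$ to be tightly coordinated as one-factorisations, since each edge of $K_{2n}$ has exactly one round in $F$ and exactly one in $G$, and this is precisely where the rigidity of the bipyramidal structure is needed.
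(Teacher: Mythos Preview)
Your upper-bound argument is correct and matches the paper's Lemmas~\ref{lem:specialround}--\ref{lem:RR2} and Corollary~\ref{cor:bound}. The gap is in the construction. As you describe it, in every round $r\in\{3,\ldots,2n-1\}$ the extra clubs $2n$ and $2n+1$ are each ``partnered'' with an original club, which forces the unique round $r^*$ in which they meet into $\{1,2,2n,2n+1\}$. But by your own case analysis that caps the total at $2n^2-3n+3$: your scheme yields $(2n-3)(n-1)$ common fixtures from rounds $3,\ldots,2n-1$ plus at most $2n$ from the two saturated paired blocks, and $(2n-3)(n-1)+2n=2n^2-3n+3$, one short of $c(n)$. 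There is also a structural mismatch: the bipyramidal factorisation has $2n-2$ rotated one-factors and one fixed one-factor, so a block of $2n-3$ ``symmetric'' rounds does not line up with the group action.

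The paper aligns the bipyramidal structure with the round labelling differently. The cyclic group of order $2n-2$ fixes exactly one one-factor of $K_{2n}$ (the long diagonals $\{x,x+n-1\}$ together with the edge $\{-\infty,\infty\}$ between the two poles), and the paper makes \emph{that} the special round: $r^*=2n-1$, with $F_{2n-1}^2=F_{2n-1}^1\cup\bigl\{\{-i\infty,i\infty\}\bigr\}$ contributing $n$ common fixtures. The $2n-2$ rotated one-factors fill rounds $1,\ldots,2n-2$, each contributing $n-1$: the starter $F_1^2$ is $F_1^1$ with a single edge $e$ deleted and replaced by edges to $\pm i\infty$. The deleted edge $e$ and its translate $\sigma(e)$ are then placed in $F_{2n}^2$ and $F_{2n+1}^2$, giving one further common fixture each; thus the paired-round bounds are saturated as $(n-1)+1$, not by an even split. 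The total is $(2n-2)(n-1)+n+2=c(n)$. The delicate point your sketch does not address is that $e$ must be chosen so that its $G$-orbit is a single $(2n-2)$-cycle on $\{0,\ldots,2n-3\}$ (equivalently $e\in O_1$, an edge of ``difference $1$''); only then do the $2n-2$ deleted edges decompose into two one-factors, and only then can one arrange $e\in F_{2n}^2$ and $\sigma(e)\in F_{2n+1}^2$ simultaneously.

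For the balance refinement your outline is on the right track, but there is a further wrinkle: the two common fixtures in rounds $2n$ and $2n+1$ lie in the \emph{second} division-one round robin, where home/away is reversed relative to the first. The paper repairs this by reversing, in $K_{2n}$ only, the orientation of every edge in the orbit $O_1$; since $e$ and $\sigma(e)$ are the only edges of $O_1$ occurring as common fixtures, this does not disturb agreement in any other round, and reversing an entire orbit preserves balance.
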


When $n=1$ there are only two teams in division one, and four teams in division two. It is immediate that there can be at most one common fixture, and any draw for division two in which the clubs belonging to division one play each other in round one or two realises this. We therefore restrict attention to  $n\geq2$ throughout the rest of the paper.

\begin{remark}\label{rem:singleroundrobin}
We have assumed that division one is played as a double round robin, and division two as a single round robin, because that is the form in which the problem was presented to us by a sports organisation in 2011. 
If division one is played only as a single round robin, then our work shows that for $n\geq2$ the maximum possible number of common fixtures is $c(n)-2=2n^2-3n+2$.
\end{remark}

\begin{remark}
Our sequence $c(n)$ is a translate by 1 of sequence 
\href{http://oeis.org/A236257}{A236257} in the Online Encyclopedia of Integer Sequences (OEIS), published electronically at \href{http://oeis.org}{\nolinkurl{http://oeis.org}}. This sequence is defined by $a(n) = 2n^2 - 7n + 9=c(n-1)$, and relates to sums of $n$-gonal numbers.

The sequence $c(n)-2$ is a translate by 1 of sequence
\href{http://oeis.org/A084849}{A084849} in the OEIS, defined by $a(n) = 2n^2+n+1=c(n+1)-2$. This sequence counts the number of ways to place two non-attacking bishops on a $2\times (n+1)$ board.
\end{remark}

\subsection{Reformulation in graph-theoretic terms}

To formulate the problem in graph-theoretic terms we follow standard practice and represent each team by a vertex, and a match between teams $x$ and $y$ by the edge $\{x,y\}$. In a round robin tournament with $2m$ teams each round then corresponds to a perfect matching or \emph{one-factor} of the complete graph $K_{2m}$, and the round robin draw to an ordered one-factorisation of $K_{2m}$ (Gelling and Odeh~\cite{bib:Gelling&Odeh}, de Werra~\cite{bib:dewerra1980}). Recall that these terms are defined as follows.

\begin{definition}
A \emph{one-factor} or \emph{perfect matching} of $G=(V,E)$ is a subgraph $\bar{G}=(V,\bar{E})$ of $G$ in which the edges $\bar{E} \subseteq E$ have the following properties:
\begin{enumerate}
\item Every vertex $v\in V$ is incident on an edge $e \in \bar{E}$.
\item No two edges $e$ and $e'$ in $\bar{E}$ have any vertex in common.
\end{enumerate}
As a consequence every vertex $v \in V$ has degree one in $\bar{G}$.
A \emph{one-factorisation} of $G$ is a set of one-factors $\{\bar{G}_i=(V,\bar{E}_i) \mid i=1,\ldots,k\}$ with the properties:
\begin{enumerate}
\item 
$\bar{E}_i \cap \bar{E}_j = \emptyset, \; i \ne j$.
\item 
$\bigcup\limits_{i=1}^k \bar{E}_i = E$.
\end{enumerate}
Clearly, a necessary condition for $G$ to have a one-factorisation into $k$ one-factors is that $G$ is regular of degree $k$. In particular, for $G=K_{2m}$ any one-factorisation must have $k=2m-1$ one-factors. 
\end{definition}
Any one-factorisation can be thought of as an edge colouring of the 
given graph, and in the case of the complete graph $K_{2m}$, a one-factorisation is equivalent to a minimum edge colouring.
In what follows we will be interested in the cases $m=n$ and $m=n+1$.
We will use the languages of one-factorisations and edge colourings interchangeably.
Note that a one-factorisation or minimum edge colouring does not necessarily impose an order on the one-factors. If an order is fixed, we will say the one-factorisation is ordered or we have an ordered one-factorisation.

Turning now to the problem, suppose that the complete graph $K_{2n}=(\mathcal{V}_1,\mathcal{E}_1)$ has vertex set $\mathcal{V}_1=\{0,1,\ldots,2n-1\}$, and 
$K_{2n+2}=(\mathcal{V}_2,\mathcal{E}_2)$ has vertex set $\mathcal{V}_2=\{0,1,\ldots,2n+1\}$. Then the round robin draw in division one rounds 1 to $2n-1$ may be represented by an edge colouring
\[
\col{1} : \mathcal{E}_1 \to \{1, 2,  \ldots, {2n-1}\},
\]
where the edges coloured $r$ represent the draw in round $r$. By condition~\ref{double.condition} the draw in rounds $2n$ to $4n-2$ is then given by the colouring 
\[
\hcol : {\mathcal{E}_1} \to \{2n, 2n+1, \ldots,{4n-2}\}
\]
defined by $\hcol(e)=\col{1}(e)+(2n-1)$, and by condition~\ref{coincide.condition} the draw in division 2 may be represented by a colouring
\[
\col{2} : \mathcal{E}_2 \to \{1, 2, \ldots ,{2n+1}\}. 
\]
Clubs $x$ and $y$ therefore have a common fixture in round $r$ if and only if
\[
\col{2}(\{x,y\}) = r \in \{\col{1}(\{x,y\}),\hcol(\{x,y\})\};
\]
since $\hcol(\{x,y\})=\col{1}(\{x,y\})+(2n-1)$ this may be expressed concisely as
\[
\col{2}(\{x,y\}) = r \equiv \col{1}(\{x,y\}) \bmod (2n-1).
\]
Our problem may then be stated as follows:
\begin{mainproblemdash}
Let
$K_{2n}=(\mathcal{V}_1,\mathcal{E}_1)$ have vertex set $\mathcal{V}_1=\{0,1,\ldots,2n-1\}$, and let
$K_{2n+2}=(\mathcal{V}_2,\mathcal{E}_2)$ have vertex set $\mathcal{V}_2=\{0,1,\ldots,2n+1\}$.
Construct proper edge colourings 
\begin{align*}
\col{1} &: \mathcal{E}_1 \to \{1, 2,  \ldots, {2n-1}\}, \\
\col{2} &: \mathcal{E}_2 \to \{1, 2, \ldots ,{2n+1}\},
\end{align*}
of $K_{2n}$ and $K_{2n+2}$, respectively, maximising the number of edges $\{x,y\}\in \mathcal{E}_1$ such that
\begin{equation}\label{common-double.eq}
\col{2}(\{x,y\}) \equiv \col{1}(\{x,y\}) \bmod (2n-1).
\end{equation}
\end{mainproblemdash}

\begin{remark}
When division one is played as a single round robin then the condition of equation~\eqref{common-double.eq} for clubs $x$ and $y$ to have a common fixture becomes simply
\[
\col{2}(\{x,y\}) = \col{1}(\{x,y\}).
\]
\end{remark}

\begin{remark}
When applicable we will orient the edges to indicate the home and away status of a game, with the edges pointing from the home team to the away team. In that case we additionally require that identically coloured edges have the same orientation. We address home and away status in Section~\ref{sec:homeaway}.
\end{remark}

\section{The upper bound}
\label{sec:upperbound}

In this section we show that $c(n)=2n^2 - 3n + 4$ is an upper bound on the number of common fixtures. Recall that we assume $n\geq2$ throughout.

Division one involves $2n$ teams, so in each round there are exactly $n$ games. Thus in each round there can be at most $n$ common fixtures. However, in Lemma~\ref{lem:specialround} we show that there is at most one round in which this can occur. We then show in Lemma~\ref{lem:RR2} that condition~\ref{double.condition} constrains the total number of common fixtures that can occur in rounds 1 and $2n$ to at most $n$, and similarly in rounds $2$ and $2n+1$. Combining these conditions gives $c(n)$ as an upper bound.

\begin{lemma}
\label{lem:specialround}
There is at most one round in which there are $n$ common fixtures. For every other round there are at most $n-1$ common fixtures.
\end{lemma}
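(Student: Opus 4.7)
The plan is to exploit the mismatch between the number of teams in division one ($2n$) and division two ($2n+2$). In each round of division one there are exactly $n$ games, so trivially at most $n$ common fixtures per round. I will argue that achieving equality in any round pins down what division two must be doing in that round, and that this pinned-down behaviour can happen at most once across the whole tournament.

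First I would observe: if round $r$ has $n$ common fixtures, then every one of the $n$ division one games in round $r$ also occurs in round $r$ of division two. These $n$ games use up all $2n$ vertices from $\{0,1,\ldots,2n-1\}$ as endpoints. But in round $r$ of division two there are $n+1$ games, covering all $2n+2$ vertices of $\mathcal{V}_2$. The one extra game in division two must therefore be played between the two remaining vertices, namely clubs $2n$ and $2n+1$. In the colouring language this says $\col{2}(\{2n,2n+1\})=r$.

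Next I would use the fact that division two is a \emph{single} round robin: the edge $\{2n,2n+1\}$ receives exactly one colour under $\col{2}$. Hence the condition $\col{2}(\{2n,2n+1\})=r$ can hold for at most one value of $r$, and so at most one round can have $n$ common fixtures. Every other round has strictly fewer than $n$, i.e.\ at most $n-1$, common fixtures, which is the second statement.

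There is no real obstacle here; the content is entirely a counting argument combined with the observation that clubs $2n$ and $2n+1$ meet only once in division two. The only thing to be careful about is phrasing the ``pinning down'' step cleanly, since a common fixture by definition involves only clubs from $\{0,\ldots,2n-1\}$, so one needs to note explicitly that the uncovered vertices in division two round $r$ must be $2n$ and $2n+1$.
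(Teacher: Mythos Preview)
Your proof is correct and is essentially the same argument as the paper's. The paper phrases it from the division two side---in the unique round where clubs $2n$ and $2n+1$ meet, there are $n$ games among the common clubs, while in every other round clubs $2n$ and $2n+1$ each occupy a common club, leaving only $n-1$ such games---but this is just the contrapositive of your ``pinning down'' step, and the key observation (the edge $\{2n,2n+1\}$ gets a single colour) is identical.
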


\begin{proof}
In each round of division two there are $n+1$ games. In exactly one round the teams from the additional clubs $2n$ and $2n+1$ play each other, leaving  $(n+1) - 1 = n$ games between the $2n$ clubs common to both divisions in which it is possible to have a common fixture.

In every other round the clubs $2n$ and $2n+1$ each play a club that is common to both divisions. This leaves $(n+1)-2 = n-1$ games between clubs common to both divisions in which it is possible to have a common fixture.
\end{proof}

Recall by condition~\ref{coincide.condition} that the draws for the first and second round robins in division one are identical. This constrains the total number of common fixtures between the pairs of identical rounds in the two round robins of the first division.

\begin{lemma}
\label{lem:RR2}
In total there are at most $n$ common fixtures in rounds $1$ and $2n$. Similarly, in total there are at most $n$ common fixtures in rounds $2$ and $2n+1$.
\end{lemma}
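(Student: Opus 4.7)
The plan is to exploit the observation that, in division one, rounds $1$ and $2n$ have identical match schedules (by condition~\ref{double.condition}), and likewise rounds $2$ and $2n+1$. Write $F_r \subseteq \mathcal{E}_1$ for the one-factor of $K_{2n}$ representing round $r$ of division one; thus $F_1 = F_{2n}$ and $F_2 = F_{2n+1}$, each of size $n$.

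Next I would observe that any common fixture occurring in round $1$ of division one arises from an edge $\{x,y\} \in F_1$ which additionally satisfies $\col{2}(\{x,y\}) = 1$; a common fixture in round $2n$ similarly arises from an edge $\{x,y\}\in F_{2n} = F_1$ with $\col{2}(\{x,y\}) = 2n$. In particular, both types of common fixture draw their underlying pair $\{x,y\}$ from the same set of $n$ edges, namely $F_1$.

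The key step is then the following counting argument: since $\col{2}$ is a function (each edge of $K_{2n+2}$ receives exactly one colour), no edge $\{x,y\} \in F_1$ can simultaneously satisfy $\col{2}(\{x,y\}) = 1$ and $\col{2}(\{x,y\}) = 2n$. Consequently every edge of $F_1$ contributes to at most one of the two rounds, giving a combined total of at most $|F_1| = n$ common fixtures in rounds $1$ and $2n$. The identical argument applied to $F_2 = F_{2n+1}$, using colours $2$ and $2n+1$, yields the bound for rounds $2$ and $2n+1$.

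There is no real obstacle here — the lemma is essentially a bookkeeping consequence of the fact that in division two each pair of clubs plays exactly once, so the pair cannot contribute to two different rounds of division two. The only thing to double-check is that the rounds $2n$ and $2n+1$ are indeed within the time window of division two, which is guaranteed by condition~\ref{coincide.condition}, since division two runs through round $2n+1$.
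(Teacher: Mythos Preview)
Your proof is correct and follows essentially the same approach as the paper: both arguments observe that rounds $1$ and $2n$ of division one share the same set of $n$ fixtures, and that each such fixture can appear in at most one round of division two (since division two is a single round robin). Your version simply makes the bookkeeping more explicit via the colouring $\col{2}$, while the paper phrases it in terms of fixtures, but the underlying idea is identical.
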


\begin{proof} 
Rounds $1$ and $2n$ correspond to the first round of the first round robin in division one, and the first round of the second round robin in division one. Since the fixtures in these rounds are identical (disregarding the home and away status), and each fixture occurs once only in division two, there are at most $n$ distinct fixtures and therefore at most $n$ common fixtures in total between the two rounds.

By an identical argument, rounds 2 and $2n+1$ have in total at most $n$ common fixtures also.
\end{proof}

\begin{corollary}
\label{cor:bound}
The number of common fixtures is at most $c(n)=2n^2-3n+4$. For this to be possible the game between teams $2n$ and $2n+1$ in division two must take place in one of rounds $3$ to $2n-1$. 
\end{corollary}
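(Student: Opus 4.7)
The plan is to combine Lemmas \ref{lem:specialround} and \ref{lem:RR2} by summing bounds round-by-round, case-splitting on the location of the round $r^*$ in which clubs $2n$ and $2n+1$ meet in division two. First I would partition the $2n+1$ rounds of division two into three blocks: the pair $\{1,2n\}$, the pair $\{2,2n+1\}$, and the middle block $M=\{3,4,\ldots,2n-1\}$, which contains $2n-3$ rounds (so the total is $2 + 2 + (2n-3) = 2n+1$, as required, using $n \geq 2$). Lemma \ref{lem:RR2} immediately caps the contribution of each of the first two blocks at $n$.

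Next I would bound the contribution from $M$. By Lemma \ref{lem:specialround}, every round other than $r^*$ contributes at most $n-1$ common fixtures, and $r^*$ alone can contribute up to $n$. If $r^* \in M$, then the block $M$ contributes at most $n + (2n-4)(n-1)$. Adding the contributions of all three blocks gives
\[
n + n + n + (2n-4)(n-1) = 3n + 2n^2 - 6n + 4 = 2n^2 - 3n + 4 = c(n),
\]
which establishes the claimed bound in this case.

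If instead $r^* \notin M$, then $r^*$ lies in one of the two pairs $\{1,2n\}$ or $\{2,2n+1\}$, whose contribution is still capped at $n$ by Lemma \ref{lem:RR2}; meanwhile every round in $M$ contributes at most $n-1$. The total is then at most
\[
n + n + (2n-3)(n-1) = 2n + 2n^2 - 5n + 3 = 2n^2 - 3n + 3 < c(n).
\]
Thus in all cases the number of common fixtures is at most $c(n)$, and for equality $r^*$ must lie in $M = \{3,\ldots,2n-1\}$, giving both parts of the corollary. I expect no real obstacle here: the argument is just an arithmetic combination of the two lemmas, and the only thing to watch is that the round blocks partition correctly (which uses $n\geq 2$ to ensure $M$ is non-empty and disjoint from the endpoint pairs).
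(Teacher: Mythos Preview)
Your proposal is correct and essentially identical to the paper's own argument: the same partition of the rounds into the pairs $\{1,2n\}$, $\{2,2n+1\}$ and the middle block $\{3,\ldots,2n-1\}$, the same case split on the location of the special round, and the same arithmetic totals $2n^2-3n+4$ and $2n^2-3n+3$.
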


\begin{proof} 
Let $f_r$ be the number of common fixtures in round $r$, $1\le r \le 2n+1$. We want to bound the total number of common fixtures, which is $\sum_{r=1}^{2n+1} f_r$.

Suppose that the game between teams $2n$ and $2n+1$ occurs in round $q$. Then, by Lemmas~\ref{lem:specialround} and~\ref{lem:RR2} we have: 
\begin{align*}
f_{q} & \le n, \\
f_r & \le n-1, & r &\ne q, \\
f_r + f_{2n-1+r} &  \le n, & r &\in \{ 1, 2 \}.
\end{align*}
If $q \in \{ 1, 2, 2n, 2n+1\}$ 
then \begin{align*}
\sum_{r=1}^{2n+1} f_r & = f_1 + f_2 + f_{2n} + f_{2n+1} + \sum_{r=3}^{2n-1} f_r \\
& \le 2n + (2n-3)(n-1) \\
& = 2n^2 - 3n + 3.
\end{align*}
Otherwise, we have $q \in \{ 3, 4, \ldots, 2n-1\}$ and
\begin{align*}
\sum_{r=1}^{2n+1} f_r & = f_1 + f_2 + f_{2n} + f_{2n+1} + \sum_{r=3}^{2n-1} f_r \\
& \le 2n + (2n-4)(n-1) + n\\
& = 2n^2 - 3n + 4.
\end{align*}
In either case we have $\sum_{r=1}^{2n+1} f_r \le 2n^2 - 3n + 4$, with equality possible only when $q \in \{ 3,4, \ldots , 2n-1\}. $ 
\end{proof}

\begin{remark}\label{rem:lowerbound}
When division one is played as a single round robin, the above argument shows that the number of common fixtures is at most
\[
n+(2n-2)(n-1) = 2n^2 -3n +2 = c(n)-2.
\]
\end{remark}

\section{The construction}
\label{sec:construction}

In this section we construct one-factorisations 
$\mathcal{F}^1 = \{F_r^1 \mid 1\leq r\leq 2n-1\}$ of $K_{2n}$ and $\mathcal{F}^2 = \{F_r^2 \mid 1\leq r\leq 2n+1\}$ of $K_{2n+2}$ realising the upper bound $c(n)$ of Corollary~\ref{cor:bound}. Here each one-factor $F_r^d$ represents the draw in round $r$ of division $d$. In the general case $n\geq 3$ our construction
uses a \emph{factor-1-rotational}~\cite{bib:Mendelsohn&Rosa} one-factorisation of $K_{2n}$, also known as a  
\emph{bipyramidal}~\cite{bib:Mazzuoccolo&Rinaldi} one-factorisation. This construction does not apply when $n=2$, so we first handle this case separately in Section~\ref{sec:n=2}, before giving our general construction in Section~\ref{sec:general}. We conclude by discussing home and away status for $n\geq 3$ in Section~\ref{sec:homeaway}.

\subsection{The case $n=2$}
\label{sec:n=2}

When $n=2$ we define the required one-factorisations
$\mathcal{F}^1 = \{F_r^1 \mid 1\leq r\leq 3\}$ of $K_{4}$ and 
$\mathcal{F}^2 = \{F_r^2 \mid 1\leq r\leq 5\}$ of $K_{6}$ as follows:
\begin{align*}
F_1^1 &=\{(0,1),\mathbf{(2,3)}\}, & F_1^2 &= \{\mathbf{(2,3)},(4,0),(5,1)\}, \\
F_2^1 &=\{\mathbf{(2,0)},(3,1)\}, & F_2^2 &= \{\mathbf{(2,0)},(3,5),(4,1)\}, \\
F_3^1 &=\{\mathbf{(0,3)},\mathbf{(1,2)}\}, 
      & F_3^2 &= \{\mathbf{(0,3)},\mathbf{(1,2)},(4,5)\}, \\
&&  F_4^2 &= \{\mathbf{(1,0)},(3,4),(5,2)\}, \\
&&  F_5^2 &= \{(0,5),\mathbf{(1,3)},(2,4)\}.
\end{align*}
The draw is also shown graphically in Figure~\ref{fig:n=2}. The common fixtures are indicated in bold, and we see that there are a total of $c(2)=2\cdot2^2-3\cdot2+4=6$ of them. Moreover, with the edges oriented as given we see that pairs of edges corresponding to common fixtures are identically oriented, and that every vertex in division one has outdegree either 1 or 2, and every vertex in division two has outdegree either 2 or 3. Thus, all three round robin draws are balanced, and together with Corollary~\ref{cor:bound} this establishes Theorem~\ref{thm:main} in the case $n=2$.

\begin{figure}
\begin{tikzpicture}[vertex/.style={circle,draw,fill=black!20},>=stealth]
\node (1-0) at (90:2) [vertex] {0};
\node (1-1) at (150:2) [vertex] {1};
\node (1-2) at (210:2) [vertex] {2};
\node (1-3) at (270:2) [vertex] {3};
\node at (0,-3) {(a)};
\begin{scope}[xshift = 100]
\node (1_0) at (90:2) [vertex] {0};
\node (1_1) at (150:2) [vertex] {1};
\node (1_2) at (210:2) [vertex] {2};
\node (1_3) at (270:2) [vertex] {3};
\node at (0,-3) {(b)};
\end{scope}
\begin{scope}[xshift = 200]
\node (2-0) at (90:2) [vertex] {0};
\node (2-1) at (150:2) [vertex] {1};
\node (2-2) at (210:2) [vertex] {2};
\node (2-3) at (270:2) [vertex] {3};
\node (2-4) at (330:2) [vertex] {4};
\node (2-5) at (30:2) [vertex] {5};
\node at (0,-3) {(c)};
\end{scope}
\begin{scope}[red,->,thick]
\draw (1-0) to (1-1);
\draw [ultra thick] (1-2) to (1-3);
\draw [ultra thick] (2-2) to (2-3);
\draw (2-4) to (2-0);
\draw (2-5) to (2-1);
\end{scope}
\begin{scope}[blue,->,thick,dashed]
\draw [ultra thick] (1-2) to (1-0);
\draw (1-3) to (1-1);
\draw [ultra thick] (2-2) to (2-0);
\draw (2-3) to (2-5);
\draw (2-4) to (2-1);
\end{scope}
\begin{scope}[green,->,thick,densely dotted]
\draw [ultra thick] (1-0) to (1-3);
\draw [ultra thick] (1-1) to (1-2);
\draw [ultra thick] (2-0) to (2-3);
\draw [ultra thick] (2-1) to (2-2);
\draw (2-4) to (2-5);
\end{scope}
\begin{scope}[magenta,->,thick,dash pattern = on 4pt off 1pt on 1pt off 1pt]
\draw [ultra thick] (1_1) to (1_0);
\draw (1_3) to (1_2);
\draw [ultra thick] (2-1) to (2-0);
\draw (2-3) to (2-4);
\draw (2-5) to (2-2);
\end{scope}
\begin{scope}[cyan,->,thick,dash pattern = on 4pt off 1pt on 1pt off 1pt on 1pt off 1pt]
\draw (1_0) to (1_2);
\draw [ultra thick] (1_1) to (1_3);
\draw [ultra thick] (2-1) to (2-3);
\draw (2-2) to (2-4);
\draw (2-0) to (2-5);
\end{scope}
\begin{scope}[black,<-,thick,dash pattern = on 10pt off 2 pt ]
\draw (1_0) to (1_3);
\draw (1_1) to (1_2);
\end{scope}
\end{tikzpicture}
\caption{The draws for $n=2$, with common fixtures denoted by thicker edges. (a) The draw in division one rounds 1--3, with the rounds denoted by red solid edges; blue dashed edges; and green dotted edges, respectively. (b) The draw in division one rounds 4--6, with the rounds denoted by magenta dash-dotted edges; cyan dash-dot-dotted edges; and black dashed edges, respectively. (c) The draw in division two, with the rounds denoted as above.}
\label{fig:n=2}
\end{figure}
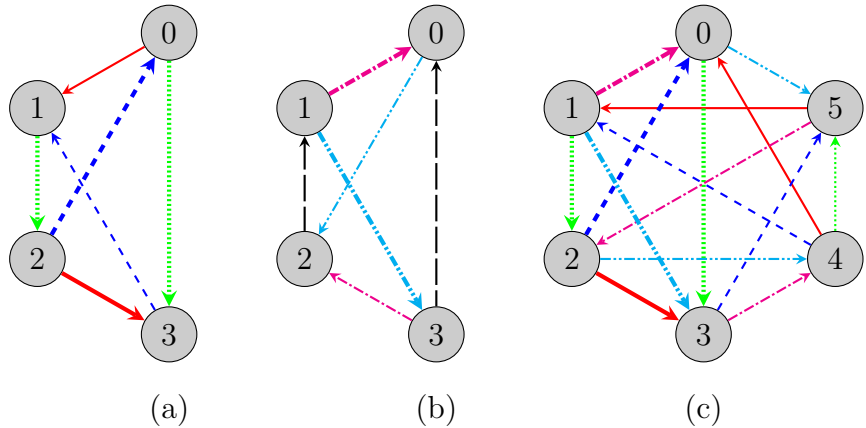

\subsection{The general case $n\geq 3$}
\label{sec:general}

\subsubsection{Overview}
\label{sec:overview}

In the general case $n\geq 3$ our draw in division one is based on a class of  one-factorisations of $K_{2n}$ known as \emph{factor-1-rotational}~\cite{bib:Mendelsohn&Rosa} or \emph{bipyramidal}~\cite{bib:Mazzuoccolo&Rinaldi}. Such a one-factorisation is obtained by first constructing a single one-factor, known as a \emph{starter}. Two of the vertices are then held fixed, while the remaining vertices are permuted according to the sharply transitive action of a group $G$ of order $2n-2$. In our case we use the cyclic group of order $2n-2$. This produces $2n-2$ one-factors, and by careful choice of the initial one-factor and group action these are all disjoint, and are completed to a one-factorisation by the addition of a final one-factor, that is fixed by the action of $G$ and consists of the remaining edges.

In order to achieve the close agreement required between the division one and two draws we exploit the symmetry of the division one draw in constructing the draw for division two. 
We begin by modifying the starter one-factor of $K_{2n}$, by replacing one of its edges with a pair of edges joining its endpoints to the two additional vertices. This gives us $n-1$ common fixtures in round 1. We then translate this one-factor by the action of $G$, to obtain $n-1$ common fixtures in each of rounds 2 to $2n-2$ as well. The draw for division one round $2n-1$ is described by the fixed one-factor of $K_{2n}$, and adding the edge between the two additional teams to this gives us a round in which there are $n$ common fixtures. It then remains to organise the remaining 
edges --- those removed from the cyclicly permuted one-factors, as well as the remaining edges between the fixed vertices --- into two more rounds, in such a way that we pick up an additional common fixture in each. We will ensure that this is possible by choosing the edge removed from the starter so that its orbit forms a cycle in the graph of length $2n-2$, and so consequently has a one-factorisation.

\subsubsection{The construction}

In order to describe the construction, it will be convenient to denote the vertices $2n-2$ and $2n-1$ by $\pm\infty$, and the vertices $2n$ and $2n+1$ by $\pm i\infty$. Then we may unambiguously define the permutation $\sigma$  of
$\mathcal{V}_2=\{0,\ldots,2n-3\}\cup\{\pm\infty\}\cup\{\pm i\infty\}$ by $\sigma(x)=x+1$, where addition is done modulo $2n-2$ for $0\leq x\leq 2n-3$, and $x+k=x$ for $x\in\{\pm\infty,\pm i\infty\}$, $k\in\integer$. The group $G=\langle\sigma\rangle$ is cyclic of order $2n-2$, and acts sharply transitively on the vertices $\{0,1,\ldots,2n-3\}$.

We begin by constructing the one-factors $F_1^1$ and $F_1^2$ in Lemma~\ref{lem:round1}. The cases $n=7$ and $n=8$ are illustrated in Figures~\ref{fig:F1 for n=7} and~\ref{fig:F1 for n=8}, respectively.


\begin{lemma} 
\label{lem:round1}
Define
\begin{align*}
s &= \begin{cases} 
     \frac{n-4}{2}  & \text{$n$ even,}\\ 
     \frac{n-3}{2}  & \text{$n$ odd,}
     \end{cases} & 
t &= n-2-s = 
     \begin{cases} 
     s+2=\frac{n}{2}    & \text{$n$ even,}\\ 
     s+1=\frac{n-1}{2}  & \text{$n$ odd,}
     \end{cases} \\
u &= \begin{cases} 
     \frac{3n-6}{2} & \text{$n$ even,}\\ 
     \frac{3n-7}{2} & \text{$n$ odd,}
     \end{cases} & 
v &= 3n-5-u = 
     \begin{cases} 
     u+1=\frac{3n-4}{2} & \text{$n$ even,}\\ 
     u+2=\frac{3n-3}{2} & \text{$n$ odd,}
     \end{cases}
\end{align*}
and let
\begin{align*}
E_1 &= \bigl\{\{x,y\}:x+y=n-2,0\leq x\leq s\bigr\} \\
    &= \bigl\{ \{0,n-2 \}, \{1,n-3\}, \ldots ,\{ s,t \}\bigr\}, \\ 
E_2 &= \bigl\{\{x,y\}:x+y=3n-5,n-1\leq x\leq u\bigr\} \\
    &= \bigl\{ \{n-1,2n-4 \}, \{n,2n-5\}, \ldots, \{ u,v \}\bigr\}, \\
E_3 &= \begin{cases}
  \bigl\{\{\frac{n-2}{2},-\infty\},\{2n-3,\infty\}\bigr\}, &\text{$n$ even,} \\
  \bigl\{\{\frac{3n-5}{2},-\infty\},\{2n-3,\infty\}\bigr\}, &\text{$n$ odd,} 
\end{cases} \\
E_4 &= \begin{cases}
  \bigl\{\{u,-i\infty\},\{v,i\infty\}\bigr\}, &\text{$n$ even,} \\
  \bigl\{\{s,-i\infty\},\{t,i\infty\}\bigr\}, &\text{$n$ odd.} 
\end{cases}
\end{align*}
Then 
\[
F_1^1=E_1 \cup E_2 \cup E_3
\]
is a one-factor of $K_{2n}$,
and 
\[
F_1^2 = \begin{cases}
        (F_1^1\cup E_4)-\bigl\{\{u,v\}\bigr\}, & \text{$n$ even,} \\
        (F_1^1\cup E_4)-\bigl\{\{s,t\}\bigr\}, & \text{$n$ odd}
        \end{cases}
\]
is a one-factor of $K_{2n+2}$. Moreover, $F_1^1$ and $F_1^2$ have precisely $n-1$ edges in common.
\end{lemma}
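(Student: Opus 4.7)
The statement is a direct verification problem, so the plan is to check that each of $F_1^1$ and $F_1^2$ covers every vertex of its underlying complete graph exactly once, and then to count the edges shared between them. Because every quantity in the lemma splits into even and odd cases, the whole argument will be a parallel pair of case analyses on the parity of $n$.

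First I would check that $F_1^1$ is a one-factor of $K_{2n}$. Writing $\pm\infty$ for the vertices $2n-2, 2n-1$, the set $E_1$ consists of pairs $\{x, n-2-x\}$ with $0 \leq x \leq s$, and so covers every element of $\{0, 1, \ldots, n-2\}$ except possibly the midpoint $(n-2)/2$; parity of $n$ controls this, since when $n$ is even the value $(n-2)/2$ is an integer that would correspond to a self-pair and hence is omitted, while when $n$ is odd there is no integer midpoint. Analogously, $E_2$ covers $\{n-1, \ldots, 2n-4\}$ completely when $n$ is even (no integer midpoint exists), but omits the midpoint $(3n-5)/2$ when $n$ is odd. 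In both parities the vertex $2n-3$ is untouched by $E_1 \cup E_2$. The set $E_3$ then contributes exactly the edge $\{2n-3, +\infty\}$ and an edge joining $-\infty$ to whichever of $(n-2)/2$ or $(3n-5)/2$ was left uncovered, completing the perfect matching. Disjointness of the three sets is immediate from the disjointness of their coordinate ranges.

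Next I would turn to $F_1^2$. Here $F_1^2$ is obtained from $F_1^1$ by a single local surgery: delete one edge $e$ (which is $\{u, v\}$ when $n$ is even and $\{s, t\}$ when $n$ is odd), and adjoin the two edges of $E_4$. In each parity the two edges of $E_4$ pair $+i\infty$ and $-i\infty$ with the two endpoints of the deleted edge $e$. Thus the removal of $e$ together with the addition of $E_4$ continues to cover the endpoints of $e$, introduces $\pm i\infty$ for the first time, and leaves every other vertex of $\mathcal{V}_2$ covered exactly once by $F_1^1 \setminus \{e\}$, so $F_1^2$ is indeed a one-factor of $K_{2n+2}$. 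Finally, since both edges of $E_4$ have an endpoint in $\{\pm i\infty\} \not\subseteq \mathcal{V}_1$, neither can lie in $F_1^1$, so $F_1^1 \cap F_1^2 = F_1^1 \setminus \{e\}$ has exactly $n - 1$ edges, as claimed. The only difficulty in executing this plan is careful bookkeeping of which midpoint is omitted and which edge $e$ is removed in each parity; no conceptual obstacle arises.
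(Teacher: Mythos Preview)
Your proposal is correct and follows essentially the same approach as the paper's proof: both verify that $E_1\cup E_2\cup E_3$ covers each vertex of $K_{2n}$ exactly once by tracking which of the two ``midpoint'' vertices $(n-2)/2$ or $(3n-5)/2$ is omitted from $E_1$ or $E_2$ according to parity and then picked up by $E_3$, and both handle $F_1^2$ via the delete-one-edge-and-add-$E_4$ surgery, deducing $|F_1^1\cap F_1^2|=n-1$ from the fact that $E_4$ meets $\{\pm i\infty\}$. Your phrasing in terms of self-pairs at the integer midpoint is a slightly cleaner way to organise the parity bookkeeping, but the underlying verification is identical.
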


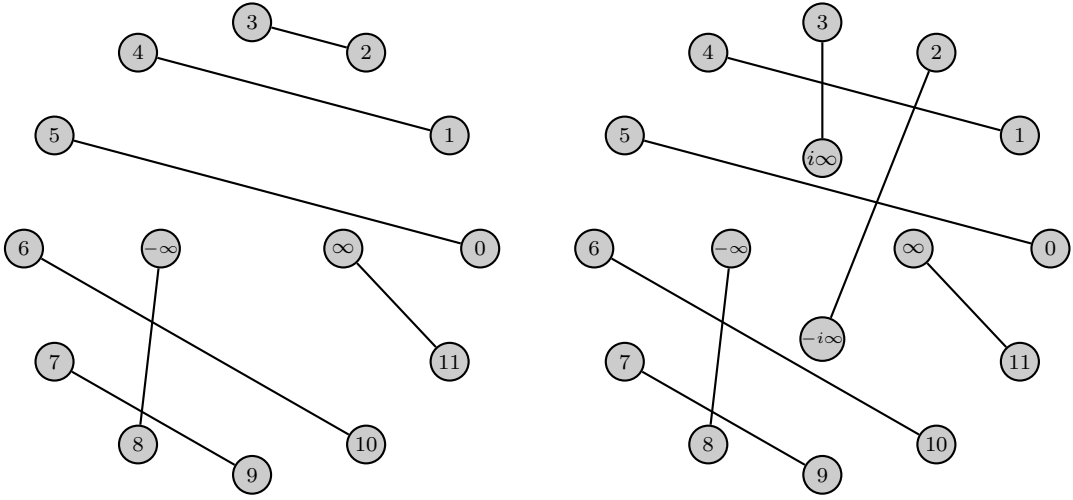
\begin{figure}
\begin{tikzpicture}[vertex/.style={circle,draw,fill=black!20,minimum size = 5mm,inner sep=0pt},>=stealth,thick]
\newlength{\radius}
\setlength{\radius}{3cm}
\foreach \x in {0,1,...,11}
   \node (\x-1) at (30*\x:\radius) [vertex] {$\scriptstyle\x$};
\node (-inf-1) at (180:0.4*\radius) [vertex] {$\scriptscriptstyle -\infty$};
\node (inf-1) at (0:0.4*\radius) [vertex] {$\scriptstyle\infty$};
\begin{scope}[xshift=2.5*\radius]
\foreach \x in {0,1,...,11}
   \node (\x-2) at (30*\x:\radius) [vertex] {$\scriptstyle\x$};
\node (-inf-2) at (180:0.4*\radius) [vertex] {$\scriptscriptstyle -\infty$};
\node (inf-2) at (0:0.4*\radius) [vertex] {$\scriptstyle\infty$};
\node (iinf) at (90:0.4*\radius) [vertex] {$\scriptstyle i\infty$};
\node (-iinf) at (270:0.4*\radius) [vertex] {$\scriptscriptstyle -i\infty$};
\end{scope}
\draw (0-1) -- (5-1);
\draw (1-1) -- (4-1);
\draw (2-1) -- (3-1);
\draw (6-1) -- (10-1);
\draw (7-1) -- (9-1);
\draw (8-1) -- (-inf-1);
\draw (11-1) -- (inf-1);
\draw (0-2) -- (5-2);
\draw (1-2) -- (4-2);
\draw (2-2) -- (-iinf);
\draw (3-2) -- (iinf);
\draw (6-2) -- (10-2);
\draw (7-2) -- (9-2);
\draw (8-2) -- (-inf-2);
\draw (11-2) -- (inf-2);
\end{tikzpicture}
\caption{The one-factors $F_1^1$ (left) and $F_1^2$ (right) in the case $n=7$.}
\label{fig:F1 for n=7}
\end{figure}


\begin{proof}
It is easy to check that each of the $2n$ vertices $x\in \mathcal{V}_1=\{0,1,\ldots,2n-3\}\cup\{\pm\infty\}$ belongs to precisely one edge in the union $E_1 \cup E_2 \cup E_3$. For $0\leq x\leq n-2$ the edge containing $x$ belongs to $E_1$, unless $n$ is even and $x=\frac{n-2}{2}$, in which case it belongs to $E_3$. For $n-1\leq x\leq 2n-4$ the edge belongs to $E_2$, unless $n$ is odd and $x=\frac{3n-5}2$, in which case it belongs to $E_3$; and for $x=2n-3$ and $x\in\{\pm\infty\}$ the edge belongs to $E_3$. 

When $n$ is even we obtain $F_1^2$ from $F_1^1$ by deleting the edge $\{u,v\}$ and adding the edges $\{u,-i\infty\}$ and $\{v,i\infty\}$; while when $n$ is odd we obtain $F_1^2$ from $F_1^1$ by deleting the edge $\{s,t\}$ and adding the edges $\{s,-i\infty\}$ and $\{t,i\infty\}$. Thus each vertex of $K_{2n+2}$ belongs to precisely one edge of $F_1^2$ also. Since $F_1^1$ contains precisely $n$ edges it follows moreover that $|F_1^1\cap F_1^2|=n-1$, as claimed.
\end{proof}

The one-factor $F_1^1$ is the starter discussed above in Section~\ref{sec:overview}.
From each one-factor $F_1^d$, $d=1,2$, we now construct $2n-2$ one-factors $F_r^d$,  $1 \le r \le 2n-2$, by permuting the vertices $\{ 0, 1, \dots 2n-3\}$ according to the permutation $\sigma = (0,1,\ldots,2n-3)$. For $1\leq r\leq 2n-2$ and $d=1,2$ we define
\[
F_r^d = \sigma^{r-1}(F_1^d) = \bigl\{\{\sigma^{r-1}(x),\sigma^{r-1}(y)\} \mid \{x,y\}\in F_1^d\bigr\}.
\]
Then each $F_r^d$ is necessarily a one-factor of $K_{2n}$ or $K_{2n+2}$, since it's obtained from the one-factor $F_1^d$ by an automorphism of the graph. This gives us a total of $2n-2$ one-factors for each graph, whereas a one-factorisation of $K_{2n}$ requires a total of $2n-1$, and a one-factorisation of $K_{2n+2}$ requires a total of $2n+1$. To construct a $(2n-1)$th one-factor for each graph we set
\begin{align*}
F_{2n-1}^1 &= \bigl\{ \{x,x+n-1\}  \mid  0 \le x \le n-2 \bigr\} \cup\bigl\{\{-\infty,\infty\}\bigr\}, \\
 &= \bigl\{\{0,n-1\},\{1,n\},\ldots,\{n-2,2n-3\},\{-\infty,\infty\}\bigr\}, \\
F_{2n-1}^2 &= F_{2n-1}^1\cup\bigl\{\{-i\infty,i\infty\}\bigr\}.
\end{align*}
These sets of edges are easily seen to meet each vertex of $K_{2n}$ and $K_{2n+2}$, respectively, exactly once. Moreover they have precisely $n$ edges in common, namely all $n$ edges of $F_{2n-1}^1$. 

\begin{figure} 
\begin{tikzpicture}[vertex/.style={circle,draw,fill=black!20,minimum size = 5mm,inner sep=0pt},>=stealth,thick]
\setlength{\radius}{3cm}
\foreach \x in {0,1,...,13}
   \node (\x-1) at (360*\x/14:\radius) [vertex] {$\scriptstyle\x$};
\node (-inf-1) at (180:0.4*\radius) [vertex] {$\scriptscriptstyle -\infty$};
\node (inf-1) at (0:0.4*\radius) [vertex] {$\scriptstyle\infty$};
\begin{scope}[xshift=2.5*\radius]
\foreach \x in {0,1,...,13}
   \node (\x-2) at (360*\x/14:\radius) [vertex] {$\scriptstyle\x$};
\node (-inf-2) at (180:0.35*\radius) [vertex] {$\scriptscriptstyle -\infty$};
\node (inf-2) at (0:0.35*\radius) [vertex] {$\scriptstyle\infty$};
\node (iinf) at (90:0.35*\radius) [vertex] {$\scriptstyle i\infty$};
\node (-iinf) at (270:0.35*\radius) [vertex] {$\scriptscriptstyle -i\infty$};
\end{scope}
\draw (0-1) -- (6-1);
\draw (1-1) -- (5-1);
\draw (2-1) -- (4-1);
\draw (7-1) -- (12-1);
\draw (8-1) -- (11-1);
\draw (9-1) -- (10-1);
\draw (3-1) -- (-inf-1);
\draw (13-1) -- (inf-1);
\draw (0-2) -- (6-2);
\draw (1-2) -- (5-2);
\draw (2-2) -- (4-2);
\draw (7-2) -- (12-2);
\draw (8-2) -- (11-2);
\draw (3-2) -- (-inf-2);
\draw (13-2) -- (inf-2);
\draw (9-2) -- (-iinf);
\draw (10-2) -- (iinf);
\end{tikzpicture}
\caption{The one-factors $F_1^1$ (left) and $F_1^2$ (right) in the case $n=8$.}
\label{fig:F1 for n=8}
\end{figure}
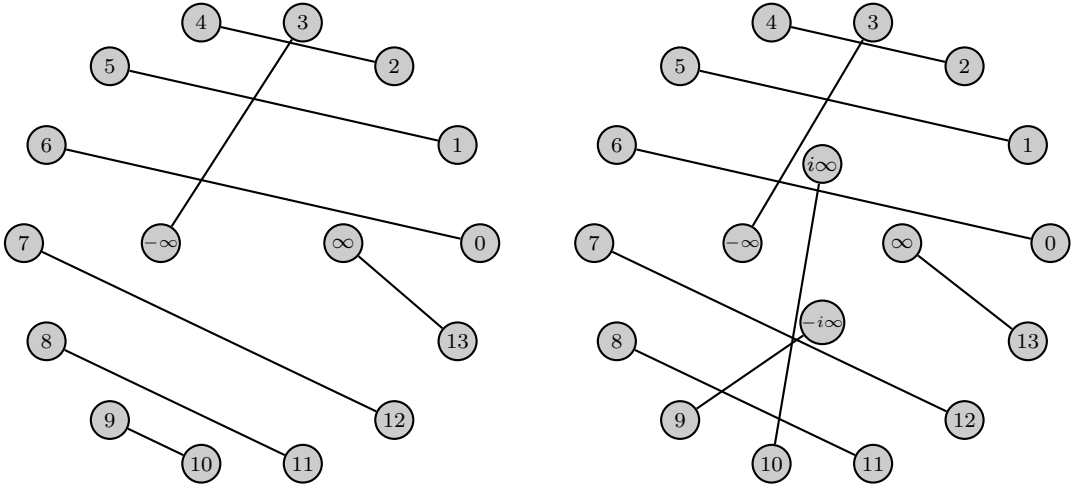

In order to construct the final two one-factors for $K_{2n+2}$ we must proceed carefully, in order to make sure we pick up an extra common fixture in each of rounds $2n$ and $2n+1$. The key point is to ensure that we place the edge $\{s,t\}$ or $\{u,v\}$ removed from $F_1^1$ when constructing $F_1^2$ in $F_{2n}^2$. This may be done as follows. When $n$ is even we set
\begin{align*}
T_1 &= \bigl\{\sigma^{2j}(\{u,v\}) \mid 0\leq j\leq n-2\bigl\}, \\
T_2 &= \sigma(T_1) \\
    &= \bigl\{\sigma^{2j+1}(\{u,v\}) \mid 0\leq j\leq n-2\bigl\},
\end{align*}
and when $n$ is odd we set
\begin{align*}
T_1 &= \bigl\{\sigma^{2j}(\{s,t\}) \mid 0\leq j\leq n-2\bigl\}, \\
T_2 &= \sigma(T_1) \\
    &= \bigl\{\sigma^{2j+1}(\{s,t\}) \mid 0\leq j\leq n-2\bigl\}.
\end{align*}
Since $v-u=1$ when $n$ is even, and $t-s=1$ when $n$ is odd, in all cases the sets $T_1$ and $T_2$ are the sets
\begin{align*}
T_{\mathrm{even}} &= \bigl\{\{2k,2k+1\} \mid 0\leq k\leq n-2\bigr\}, \\
T_{\mathrm{odd}} &= \bigl\{\{2k-1,2k\} \mid 0\leq k\leq n-2\bigr\}
\end{align*}
in some order. Just which is which depends on the value of $n$ modulo 4: 

\begin{enumerate}
\item For $n = 4\ell$, the vertex $u=\frac{3n-6}{2} = \frac{12\ell-6}{2} = 6\ell-3$ is odd, so $T_1=T_{\mathrm{odd}}$;
\item 
for $n = 4\ell + 1$, the vertex $s=\frac{n-3}{2} = \frac{4\ell-2}{2} = 2\ell-1$ is odd, so 
$T_1 = T_{\mathrm{odd}}$;
\item 
for $n = 4\ell+2$, the vertex  $u=\frac{3n-6}{2} = \frac{12\ell}{2} = 6\ell$ is even, so 
$T_1=T_{\mathrm{even}}$; and
\item 
for $n = 4\ell+3$, the vertex $s=\frac{n-3}{2} = \frac{4\ell}{2} = 2\ell$ is even, so 
$T_1=T_{\mathrm{even}}$.
\end{enumerate}
To complete $T_1$ and $T_2$ to one-factors of $K_{2n+2}$ we set
\begin{align*}
F_{2n}^2 &= T_1\cup\bigl\{\{-\infty,-i\infty\},\{\infty,i\infty\}\bigr\}, \\
F_{2n+1}^2 &= T_2\cup\bigl\{\{-\infty,i\infty\},\{\infty,-i\infty\}\bigr\}.
\end{align*}
Let
\begin{align*}
\mathcal{F}^1 & = \{F_r^1 \mid 1\leq r\leq 2n-1\}, &
\mathcal{F}^2 & = \{F_r^2 \mid 1\leq r\leq 2n+1\}. 
\end{align*}
We now claim:
\begin{theorem}
\label{lem:onefactorisation}
The set $\mathcal{F}^1$ is a one-factorisation of $K_{2n}$, and the set $\mathcal{F}^2$ is a one-factorisation of $K_{2n+2}$. Together these one-factorisations realise the upper bound of Corollary~\ref{cor:bound}.
\end{theorem}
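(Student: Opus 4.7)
The plan is to verify the one-factorisation claims by orbit analysis of the cyclic group $G=\langle\sigma\rangle$, and then count common fixtures by tracking the intersections $F_r^1\cap F_r^2$ through the same orbit structure. Since $\sigma$ is an automorphism of $K_{2n}$ (fixing $\pm\infty$) and of $K_{2n+2}$ (fixing also $\pm i\infty$), each $F_r^d=\sigma^{r-1}(F_1^d)$ with $1\le r\le 2n-2$ is automatically a one-factor because $F_1^d$ is one by Lemma~\ref{lem:round1}. The remaining factors $F_{2n-1}^1$, $F_{2n-1}^2$, $F_{2n}^2$, $F_{2n+1}^2$ are easily verified as perfect matchings directly from their definitions, so it suffices to prove pairwise disjointness within each $\mathcal{F}^d$.

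For $\mathcal{F}^1$ I would classify the edges of $K_{2n}$ by their orbit under $\sigma$. Each edge $\{x,y\}$ with $x,y\in\{0,\ldots,2n-3\}$ has a well-defined \emph{difference} $d(x,y)=\min(|x-y|,2n-2-|x-y|)\in\{1,\ldots,n-1\}$ that is $\sigma$-invariant; edges of difference $d<n-1$ form a single orbit of size $2n-2$, whereas the $n-1$ antipodal edges of difference $n-1$, together with the fixed edge $\{-\infty,\infty\}$, are exactly the edges of $F_{2n-1}^1$. Edges between finite vertices and $\pm\infty$ form two further orbits of size $2n-2$. A short parity-based calculation shows that $E_1\cup E_2$ contains exactly one edge of each difference $1,\ldots,n-2$, and that $E_3$ contributes one edge through each of $\pm\infty$; hence the $2n-2$ cyclic translates of $F_1^1$ exhaust all edges not in $F_{2n-1}^1$, and $\mathcal{F}^1$ is a one-factorisation. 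The analogous argument for $\mathcal{F}^2$ tracks the same orbits but accounts for replacing the chosen difference-1 edge of $F_1^1$ by the two spokes to $\pm i\infty$: the orbits of $F_1^2$ cover all edges of differences $2,\ldots,n-2$ and all edges from $\{0,\ldots,2n-3\}$ to $\{\pm\infty,\pm i\infty\}$, the $2n-2$ difference-1 edges split as $T_1\cup T_2$ across $F_{2n}^2$ and $F_{2n+1}^2$, and the six edges among $\{\pm\infty,\pm i\infty\}$ are distributed between $F_{2n-1}^2$, $F_{2n}^2$ and $F_{2n+1}^2$ exactly as prescribed.

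To count common fixtures I would use equivariance. For $1\le r\le 2n-2$ we have $F_r^1\cap F_r^2=\sigma^{r-1}(F_1^1\cap F_1^2)$, which has $n-1$ elements by Lemma~\ref{lem:round1}, contributing $(2n-2)(n-1)$ in total. Round $2n-1$ contributes $|F_{2n-1}^1\cap F_{2n-1}^2|=n$ since $F_{2n-1}^1\subset F_{2n-1}^2$. By condition~\ref{double.condition}, in rounds $2n$ and $2n+1$ division one replays the draws $F_1^1$ and $F_2^1$, so the common fixtures there count $F_1^1\cap F_{2n}^2$ and $F_2^1\cap F_{2n+1}^2$; the construction places the unique difference-1 edge of $F_1^1$ in $T_1\subset F_{2n}^2$, giving one common fixture in round $2n$, and translating by $\sigma$ yields one more in round $2n+1$. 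Summing gives $(2n-2)(n-1)+n+2=2n^2-3n+4=c(n)$, matching Corollary~\ref{cor:bound}. The main obstacle is bookkeeping: one must check all four residue classes of $n\bmod 4$ to confirm that $T_1$ and $T_2$ are complementary difference-1 matchings of $\{0,\ldots,2n-3\}$, since which of $T_{\mathrm{even}}$ or $T_{\mathrm{odd}}$ plays the role of $T_1$ depends on the parity of the relevant endpoint $u$ or $s$.
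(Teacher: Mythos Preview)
Your proposal is correct and follows essentially the same approach as the paper's own proof: orbit analysis of $G=\langle\sigma\rangle$ on the edges via the difference invariant, verification that $F_1^1$ (resp.\ $F_1^2$) contains exactly one representative of each size-$(2n-2)$ orbit lying in $K_{2n}$ (resp.\ all but $O_1$ in $K_{2n+2}$), and the same round-by-round count $(2n-2)(n-1)+n+1+1=c(n)$ of common fixtures. Your remark that the $n\bmod 4$ case split is needed to confirm $\{T_1,T_2\}=\{T_{\mathrm{even}},T_{\mathrm{odd}}\}$ matches exactly what the paper does just prior to stating the theorem.
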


\begin{proof}
We begin by understanding the orbits of the cyclic group $G=\langle\sigma\rangle$ of order $2n-2$ acting on the edges of $K_{2n}$ and $K_{2n+2}$.

For $\delta=1,\ldots,n-1$ let 
\[
O_\delta = \bigl\{\{x,x+\delta\} \mid 0\leq x\leq 2n-3\bigr\},
\]
where addition is carried out modulo $2n-2$, and for $\alpha\in\{\pm\infty,\pm i\infty\}$ let
\begin{align*}
O_\alpha = \bigl\{\{x,\alpha\} \mid 0\leq x\leq 2n-3\bigr\}.
\end{align*}
Finally, let also
\[
E^G = \bigl\{\{\alpha,\beta\} \mid \alpha,\beta\in\{\pm\infty,\pm i\infty\},\alpha\neq\beta\bigr\}.
\]
Then it is easily seen that each set $O_1,\ldots,O_{n-1},O_{-\infty},O_\infty,O_{-i\infty},O_{i\infty}$ is an orbit of $G$ acting on the edges of $K_{2n+2}$, and that $E^G$ is the fixed point set of this action. The orbits $O_\delta$ for $1\leq \delta\leq n-2$ and $O_\alpha$ for $\alpha\in\{\pm\infty,\pm i\infty\}$ have order $2n-2$, while the orbit $O_{n-1}$ has order $n-1$. This gives us a total of 
\begin{itemize}
\item
$n+2$ orbits of size $2n-2$ in $K_{2n+2}$, of which $n$ lie in $K_{2n}$;
\item
one orbit of size $n-1$ in $K_{2n+2}$, which also lies in $K_{2n}$;
\item
six orbits of size 1 in $K_{n+2}$, of which precisely one lies in $K_{2n}$.
\end{itemize} 
Together these account for all 
$(n+2)(2n-2)+(n-1)+6=2n^2+3n+1=(n+1)(2n+1)$ edges of $K_{2n+2}$, and all
$n(2n-2)+(n-1)+1=n(2n-1)$ edges of $K_{2n}$.

Beginning with $\mathcal{F}^1$, observe that in $E_1\subseteq F_1^1$ the differences between the vertices in each edge are
\begin{align*}
(n-2)-0  &= n-2,\\
(n-3)-1  &= n - 4,\\
         &\;\;\vdots\\
 t-s     &= \begin{cases}
            \frac{n}{2}-\frac{n-4}{2}=2,  & \text{$n$ even}\\
            \frac{n-1}{2}-\frac{n-3}{2}=1, &\text{$n$ odd},
            \end{cases}
 \end{align*}
while in $E_2\subseteq F_1^1$ the differences are given by
\begin{align*}
 (2n-4)-(n-1) &= n-3,\\
 (2n-5) - (n) &= n - 5,\\
 &\;\; \vdots\\
 v-u &= \begin{cases}
        \frac{3n-4}{2}-\frac{3n-6}{2}=1, & \text{$n$ even} \\
        \frac{3n-7}{2}-\frac{3n-3}{2}=2, & \text{$n$ odd}. 
        \end{cases}
\end{align*}
Together these differences are distinct and take all values from $1$ to $n-2$.
Consequently, $E_1\cup E_2$ contains precisely one edge from each orbit $O_1,\ldots,O_{n-2}$. In addition, the set $E_3$ contains precisely one edge from each orbit $O_{\pm\infty}$, and so in total $F_1^1$ contains precisely one representative from each of the $n$ orbits of size $2n-2$ lying in $K_{2n}$. Note also that $F_{2n-1}^1$ consists of $O_{n-1}$, together with the sole fixed edge $\{-\infty,\infty\}$ lying in $K_{2n}$. Since $F_r^1=\sigma^{r-1}(F_1^1)$ for $1\leq r\leq 2n-2$ it immediately follows that the $F_r^1$ are all disjoint, and together account for every edge of $K_{2n}$. The set $\mathcal{F}^1 = \{F_r^1 \mid 1\leq r\leq 2n-1\}$ therefore forms a one-factorisation of $K_{2n}$.

Turning now to $\mathcal{F}^2$, the one-factor $F_1^2$ is obtained from $F_1^1$ by deleting whichever edge $\{s,t\}$ or $\{u,v\}$ belongs to $O_1$, and replacing it with an edge from each of $O_{-i\infty}$ and $O_{i\infty}$. Consequently $F_1^2$ contains precisely one representative of $n+1$ of the orbits of size $2n-2$, namely $O_2,\ldots,O_{n-2}$ and $O_\alpha$ for $\alpha\in\{\pm\infty,\pm i\infty\}$. We again have $F_r^2=\sigma^{r-1}(F_1^2)$ for $1\leq r\leq 2n-2$, so it immediately follows that the $F_r^2$ are disjoint for $1\leq r\leq 2n-2$, with union $O_2\cup\cdots\cup O_{n-2}\cup O_{-\infty}\cup O_\infty\cup O_{-i\infty}\cup O_{i\infty}$. It's now easily checked the remaining one-factors $F_{2n-1}^2,F_{2n}^2,F_{2n+1}^2$ are disjoint with union $O_1\cup O_{n-1}\cup E^G$, and the claim that $\mathcal{F}^2$ is a one-factorisation of $K_{2n+2}$ follows.

We now count the common fixtures. By Lemma~\ref{lem:round1} we obtain $n-1$ common fixtures in round 1, and this gives us $n-1$ common fixtures in each round $r$ for  $1\leq r\leq2n-2$, since the draws for these rounds are obtained from those in round 1 by translation by $\sigma^{r-1}$. As observed above $|F_{2n-1}^1\cap F_{2n-1}^2|=n$, so we obtain $n$ common fixtures in round $2n-1$. By choice of $T_1$ we obtain a further common fixture in round $2n$, and since $T_2=\sigma(T_1)$, $F_2^1=\sigma(F_1^1)$, this gives another common fixture in round $2n+1$ also. Summing, we obtain the upper bound of Corollary~\ref{cor:bound}, as claimed.
\end{proof}

\begin{remark}
In the above construction only two of the common fixtures occur in the final two rounds of division two. Thus, if division one is played as a single round robin only, then our construction achieves a total of $c(n)-2$ common fixtures. Combined with the lower bound of Remark~\ref{rem:lowerbound}, this proves the claim of Remark~\ref{rem:singleroundrobin} that $c(n)-2$ is the maximum possible number of common fixtures in this case.
\end{remark}

\begin{remark}
Our general construction described in this section does not apply when $n=2$, because for $n=2$ the only orbits of order $2n-2=2$ are $O_{\pm\infty}$. In particular, for $n=2$ the orbit $O_1$ has order $n-1=1$ rather than 2.
\end{remark}

\subsection {Home and away status}
\label{sec:homeaway}

To complete the proof of Theorem~\ref{thm:main} it remains to show that the draws for all three round robins can be chosen to be balanced for $n\geq 3$, subject to the condition that the same club be designated the home team in both divisions in any common fixture. This amounts to orienting the edges of $K_{2n}$ and $K_{2n+2}$ in such a way that the indegree of each vertex differs from its outdegree by exactly one, and any edge corresponding to a common fixture is identically oriented in both graphs.

To achieve this we orient the edges of $K_{2n+2}$ belonging to each orbit of the action of $G$ as follows:
\begin{align*}
O_\delta &= \{(x,x+\delta) \mid 0\leq x\leq 2n-3\} & &\text{for $1\leq \delta\leq n-2$}, \\
O_{n-1} &= \{(x,x+(n-1)) \mid 0\leq x\leq n-2\},  \\
O_\alpha &= \bigcup_{k=0}^{n-2}\{(2k,\alpha),(\alpha,2k+1)\} &
                & \text{for $\alpha=\infty,i\infty$}, \\
O_\alpha &= \bigcup_{k=0}^{n-2}\{(2k+1,\alpha),(\alpha,2k)\} &
                & \text{for $\alpha=-\infty,-i\infty$},
\end{align*}
and
\[
E^G = \{(-\infty,\infty),(-\infty,-i\infty),(\infty,i\infty),(-i\infty,\infty),(-i\infty,i\infty),(i\infty,-\infty)\}.
\]
For $1\leq \delta\leq n-2$ each orbit $O_\delta$ is a disjoint union of cycles of length at least 3. Using this fact it is easily checked that orienting the edges as above achieves balance for the draw in division two, with the vertices belonging to $\{0,1,\ldots,n-2,-\infty,-i\infty\}$ having indegree $n$ and outdegree $n+1$, and the vertices belonging to $\{n-1,n,\ldots,2n-3,\infty,i\infty\}$ having indegree $n+1$ and outdegree $n$. 

As a first step towards achieving balance in division one we regard $K_{2n}$ as a subgraph of $K_{2n+2}$, and give each edge of $K_{2n}$ the orientation it receives as an edge of $K_{2n+2}$. The resulting draw is balanced, and edges corresponding to common fixtures in rounds $1$ to $2n-1$ are identically oriented. However, the two edges corresponding to common fixtures in rounds $2n$ and $2n+1$ are oppositely oriented, because the orientations of the edges of $K_{2n}$ are reversed in rounds $2n$ to $4n-2$. But this is easily remedied, because these edges both belong to $O_1$, and are the only edges in this orbit that occur in common fixtures. Thus we may achieve our goal by simply reversing the orientation in $K_{2n}$ of all edges belonging to $O_1$, which has no effect on the balance. This completes the proof of Theorem~\ref{thm:main}.

\begin{remark}
When division one is played as a single round robin only, the final step of reversing the orientation of $O_1$ is unnecessary, and we may achieve balance in both divisions one and two by simply orienting $K_{2n}$ as a subgraph of $K_{2n+2}$.
\end{remark}

\end{document}